\theoremstyle{plain}
\newtheorem{lem}[subsection]{Lemma}
\newtheorem{thm}[subsection]{Theorem}
\newtheorem{prop}[subsection]{Proposition}
\newtheorem{notation}[subsection]{Notation}
\theoremstyle{definition}
\theoremstyle{remark}
\newtheorem{rem}[subsection]{Remark}
\numberwithin{equation}{section}
\title[Higher Adams differentials and algebraic Novikov differentials]{A correspondence between higher Adams differentials and higher algebraic Novikov differentials at odd primes}
\author[X. Wang]{Xiangjun Wang}
\author[Y. Zhang]{Yu Zhang$^*$}
\address{Department of Mathematics, Nankai University, No.94 Weijin Road, Tianjin 300071, P. R. China}
\email{xjwang@nankai.edu.cn}
\address{Department of Mathematics, Nankai University, No.94 Weijin Road, Tianjin 300071, P. R. China}
\email{zhang.4841@buckeyemail.osu.edu}
\subjclass[2020]{14F42, 55Q45, 55T15, 55T25}
\thanks{Keywords: Motivic homotopy theory, Stable homotopy of spheres, Adams spectral sequences, algebraic Novikov spectral sequences.}
\thanks{$*$ Corresponding author}
\begin{document}

\maketitle

\begin{abstract}
    This paper studies the higher differentials of the classical Adams spectral sequence at odd primes.  In particular, we follow the ``cofiber of $\tau$ philosophy'' of Gheorghe, Isaksen, Wang, and Xu to show that higher Adams differentials agree with their corresponding higher algebraic Novikov differentials in a certain range. 
\end{abstract}

\section{Introduction}
\label{sec:intro}

The computation of the stable homotopy groups of the sphere $\pi_*(S^0)$ is one of the most important problems in homotopy theory.  In recent years, a major breakthrough in this area is the work of Isaksen-Wang-Xu \cite{Isaksen_Wang_Xu_more_stable_stems}, which successfully extended the computation of the $2$-primary components of $\pi_*(S^0)$ to dimension 90.  Their computation is based on the 
``cofiber of $\tau$ philosophy'' developed in Gheorghe-Wang-Xu \cite{Gheorghe_Wang_Xu}.  The key insight of \cite{Gheorghe_Wang_Xu, Isaksen_Wang_Xu_more_stable_stems} is that one can compute difficult higher Adams differentials from higher algebraic Novikov differentials
using motivic homotopy theory. 
Although the method developed in \cite{Gheorghe_Wang_Xu} holds for arbitrary primes, it has only been extensively exploited for $2$-primary computations.  In this paper, we apply the 
``cofiber of $\tau$ philosophy'' to odd-primary computations.

\subsection*{Some classical spectral sequences}

Throughout this paper, we let $p$ denote a fixed odd prime.
The Adams spectral sequence (ASS) and the Adams-Novikov spectral sequence (ANSS) are two of the most powerful tools to compute the $p$-primary component of $\pi_*(S^0)$. 

The $E_2$-page of the ASS is $Ext_{\mathcal{A}_{*}}^{*,*} (\mathbb{F}_p, \mathbb{F}_p)$, where 
$\mathcal{A}_{*}$ 
is the dual mod $p$ Steenrod algebra.  We can write 
$\mathcal{A}_{*} = \mathbb{F}_p [t_1, t_2, \cdots] \otimes E [\tau_0, \tau_1, \tau_2, \cdots]$, where $\mathbb{F}_p [t_1, t_2, \cdots]$ is a polynomial algebra with coefficients in $\mathbb{F}_p$, and $E [\tau_0, \tau_1, \tau_2, \cdots]$ is an exterior algebra with coefficients in $\mathbb{F}_p$.  The differentials of the ASS have the form $d^{Adams}_r: E^{s,t}_r (S) \rightarrow E^{s+r,t+r-1}_r (S)$, $r \geq 2$.

The $E_2$-page of the ANSS is $Ext_{BP_*BP}^{*,*}(BP_*, BP_*)$, where $BP$ denotes the Brown-Peterson spectrum. We have $BP_*:= \pi_*(BP) = \mathbb{Z}_{(p)}[v_1, v_2, \cdots]$, and that $BP_*BP = BP_* [t_1,t_2,\cdots]$, where $\mathbb{Z}_{(p)}$ denotes the integers localized at $p$.  The inner degrees of the generators are $|v_n| = |t_n| = 2(p^n - 1)$.

The algebraic Novikov spectral sequence (algNSS) \cite{Miller_algANSS,Novikov_methods} converges to the Adams-Novikov $E_2$-page.  The $E_2$-page of the algNSS is $Ext^{s,t}_{P_*} (\mathbb{F}_p, I^k / I^{k+1})$, where $I$ denotes the ideal $(p, v_1, v_2, \cdots) \subset BP_*$, and $P_* = BP_* BP/I = \mathbb{F}_p [t_1, t_2, \cdots]$ is the $\mathbb{F}_{p}$-coefficient polynomial algebra. The differentials have the form $d_r^{alg}: \Bar{E}_r^{s,t,k} \to \Bar{E}_r^{s+1,t,k+r-1}$, $r \geq 2$.
 
\begin{rem}
Here, we have re-indexed the pages of the algNSS to align with the notations in \cite{Gheorghe_Wang_Xu, Isaksen_Wang_Xu_more_stable_stems}.
\end{rem}

The $E_2$-page of the Adams spectral sequence can also be computed via a spectral sequence, called the Cartan-Eilenberg spectral sequence (CESS) \cite{Cartan_Eilenberg_homological_algebra, Ravenel_stable_homotopy_groups}.  For odd primes $p$, the $E_2$-page of the CESS coincides with the $E_2$-page of the algNSS up to degree shifting \cite{Miller_algANSS}.

These four spectral sequences fit into the following square \cite{Miller_relations}.
\begin{align}
\xymatrix@C=5em{
Ext_{P_*}^{s,t} (\mathbb{F}_{p}, I^k / I^{k+1}) \ar@{=>}[r]^{CESS} \ar@{=>}[d]_{algNSS} & Ext_{\mathcal{A}_*}^{s+k,t+k} (\mathbb{F}_p , \mathbb{F}_p) \ar@{=>}[d]^{ASS}  \\ 
Ext^{s,t}_{BP_*BP}(BP_*, BP_*) \ar@{=>}[r]^{\quad \quad \quad   ANSS}  & \pi_{t-s} (\hat{S}^0)  
}
\end{align}

The Adams differentials $d^{Adams}_r$ and the Adams-Novikov differentials $d^{AN}_r$ are difficult to compute in general.  Such computations might require an understanding of complicated geometric behaviors.  On the other hand, the algebraic Novikov differentials $d^{alg}_r$ are much easier to compute.  This is because the entire construction of the algNSS is purely algebraic. 

It is long been observed that $d^{alg}_r$ and $d^{Adams}_r$ are closely related.  It is desirable if we could determine $d^{Adams}_r$ differentials based on $d^{alg}_r$ computations.  The following result realizes the idea for $r=2$, while the problem for general  $r>2$ remains open.

\begin{thm}[Novikov \cite{Novikov_methods}, Andrews-Miller \cite{Andrews_Miller,Miller_relations}]
\label{thm: miller transition}
Suppose $z$ is an element in $Ext_{\mathcal{A}_*}^{s+k,t+k} (\mathbb{F}_p , \mathbb{F}_p)$
which is detected in the CESS by $x \in Ext_{P_*}^{s,t} (\mathbb{F}_{p}, I^k / I^{k+1})$. Then, $d^{Adams}_2 (z)$ is detected by $d^{alg}_2 (x) \in Ext^{s+1,t}_{P_*}(\mathbb{F}_{p}, I^{k+1} / I^{k+2})$.
\end{thm}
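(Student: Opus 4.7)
The plan is to prove the theorem at the level of cobar complexes by matching the two differentials via Miller's identification. Concretely, the CESS arises from filtering the cobar complex $C^\bullet(\mathcal{A}_*)$ by an exterior-length filtration coming from $E_* = E[\tau_0, \tau_1, \ldots]$, while the algNSS arises from filtering $C^\bullet(BP_*BP)$ by powers of $I = (p, v_1, v_2, \ldots)$. The classical comodule isomorphism $Ext^{k,*}_{E_*}(\mathbb{F}_p, \mathbb{F}_p) \cong I^k/I^{k+1}$, sending $\tau_0^{a_0}\tau_1^{a_1}\cdots$ to $p^{a_0}v_1^{a_1}\cdots$, identifies the two $E_2$-pages (up to the stated degree shift).

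Next, I would compute both $d^{alg}_2(x)$ and $d^{Adams}_2(z)$ in parallel via explicit chain representatives. Pick $x_0 \in I^k \otimes \bar{P}_*^{\otimes s}$ representing $x$ modulo $I^{k+1}$; then the $BP_*BP$-cobar differential of a lift of $x_0$ lies in $I^{k+1} \otimes \bar{P}_*^{\otimes s+1}$, and its class is $d^{alg}_2(x)$. Via Miller's isomorphism, $x_0$ also determines a cochain $\tilde z \in F^k C^{s+k}(\mathcal{A}_*)$ whose class modulo $F^{k+1}$ represents $z$ on the $E_\infty$-page of the CESS. The $\mathcal{A}_*$-cobar differential of $\tilde z$ lies in $F^{k+1} C^{s+k+1}(\mathcal{A}_*)$, and by the standard spectral-sequence formalism its class in $Ext^{s+1,t}_{P_*}(\mathbb{F}_p, I^{k+1}/I^{k+2})$ detects $d^{Adams}_2(z)$.

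The heart of the argument is to show these two chain-level classes coincide under Miller's isomorphism. This is a direct computation comparing the coproducts of $\mathcal{A}_*$ and $BP_*BP$: the leading part of each cobar differential — the part that decreases the respective filtration by exactly one — matches under the correspondence $\tau_i \leftrightarrow v_i$. Combined with naturality of the Adams spectral sequence under the unit map $S^0 \to BP$, which on $E_2$-pages realizes the CESS associated-graded projection as the mod-$I$ reduction on the $BP_*BP$ side, this yields the desired detection statement.

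The main obstacle is the careful chain-level bookkeeping. One must verify that the lift $\tilde z$ of $x_0$ to $\mathcal{A}_*$ is well-defined modulo $F^{k+1}$, that the resulting detecting class of $d^{Adams}_2(z)$ is independent of all choices, and, most importantly, that the two cobar differentials genuinely agree on associated gradeds. The last step is classical but nontrivial, since the full cobar differentials on $\mathcal{A}_*$ and $BP_*BP$ are not literally equal; they coincide only after Miller's identification and modulo higher filtration, and handling this mismatch rigorously is the essential content of the theorem.
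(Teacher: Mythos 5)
The paper itself does not prove this statement: it is quoted from Novikov and Andrews--Miller, so your attempt has to be measured against those classical arguments rather than against a proof in the text. Your algebraic setup (the exterior-length filtration on the cobar complex of $\mathcal{A}_*$, the $I$-adic filtration on the cobar complex of $BP_*BP$, and the identification $Ext^{k}_{E[\tau_0,\tau_1,\dots]}(\mathbb{F}_p,\mathbb{F}_p)\cong I^k/I^{k+1}$ via $\tau_0\mapsto p$, $\tau_i\mapsto v_i$) is the correct algebraic half of the story. The gap is in the central step, where you claim that the $\mathcal{A}_*$-cobar differential of the cochain $\tilde z$ ``detects $d_2^{Adams}(z)$ by the standard spectral-sequence formalism.'' The cobar complex of $\mathcal{A}_*$ together with its exterior filtration only knows the Cartan--Eilenberg spectral sequence; the filtration-jump class you describe is a CESS differential, not an Adams differential. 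At odd primes the CESS collapses at $E_2$ (this is exactly the fact the paper cites from Ravenel, Theorem 4.4.3, in proving its Lemma \ref{lem: direct sum decompose Adams E2}), and more pointedly: since $z$ is an honest class in $Ext_{\mathcal{A}_*}$ detected by $x$, you may choose $\tilde z$ to be an actual cocycle in $F^k$ with symbol $x$, in which case your proposed detecting class is zero --- while $d_2^{Adams}(z)$ is frequently nonzero. The Adams $d_2$ is a homotopy-theoretic invariant of the Adams tower (secondary information), and no purely algebraic differential on $C^\bullet(\mathcal{A}_*)$ can compute it.

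What is missing is precisely the topological input that makes the theorem true. Novikov's and Miller's proofs compare the $H\mathbb{F}_p$-based Adams tower of the sphere with the canonical $BP$-based resolution, i.e.\ they work with a filtered object in the category of towers (Miller's ``square''), and they use that the $H\mathbb{F}_p$-Adams spectral sequences of $BP$ and of its smash powers are completely understood (they collapse, with $q_i$ detecting $v_i$). It is this comparison of towers --- not naturality of the ASS along the single unit map $S^0\to BP$, which only maps to the ASS of $BP$ and forgets the higher stages of the resolution --- that transports the algebraic Novikov $d_2$ into the topological Adams $d_2$ for elements detected in Cartan--Eilenberg filtration $k$. (In the language of the present paper, the analogous input for higher differentials is the Gheorghe--Wang--Xu identification of the algebraic Novikov spectral sequence with the motivic Adams spectral sequence of the cofiber of $\tau$, fed into the zig-zag of diagram \eqref{diagram: three comparisons}; note that the paper's Theorem \ref{thm: alg eq adams} with $r=2$ only recovers this statement in a restricted range, so the full $r=2$ theorem really does require the Novikov--Miller tower argument.) Your coproduct comparison $\tau_i\leftrightarrow v_i$ modulo higher filtration is needed, but by itself it proves a statement about the CESS, not about $d_2^{Adams}$.
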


\begin{rem}
    Indeed, one can use this approach to find new nontrivial secondary Adams differentials.  See \cite{Wang_Wang_Zhang} for some practical examples.
\end{rem}

\subsection*{New developments using motivic homotopy theory}

For general $r \geq 2$, Gheorghe-Wang-Xu \cite{Gheorghe_Wang_Xu} developed the ``cofiber of $\tau$ philosophy'' to compare $d^{alg}_r$ with $d^{Adams}_r$.  
One key feature of their approach is the usage of motivic homotopy theory, which originated from the work of Morel and Veovodsky \cite{Morel_Voevodsky}.  Motivic homotopy theory behaves very similarly to the classical homotopy theory.  For example, there are motivic dual Steenrod algebras \cite{Voevodsky_reduced_power, Voevodsky_motivic_EM} and motivic Adams spectral sequences (mASS) \cite{Duggerz_Isaksen_motivic_Adams, Hu_Kriz_Ormsby_convergence}.

The computational technique of \cite{Gheorghe_Wang_Xu} can be illustrated in the following diagram.
\begin{align}
\label{diagram: GWX diagram}
\xymatrix{
 Ext^{*,*,*}_{\mathcal{A}_{*,*}^{\mathbb{C}}} (\mathbb{F}_p [\tau], \mathbb{F}_p)   \ar@{=>}[d]^{mASS}  & Ext^{*,*,*}_{\mathcal{A}_{*,*}^{\mathbb{C}}} (\mathbb{F}_p [\tau], \mathbb{F}_p[\tau])   \ar@{=>}[d]^{mASS} \ar[r] \ar[l] & Ext^{*,*}_{\mathcal{A}_*} (\mathbb{F}_p, \mathbb{F}_p)  \ar@{=>}[d]^{ASS} \\
 \pi_{*,*} (\widehat{S^{0, 0}}/ \tau)   & \pi_{*,*} (\widehat{S^{0, 0}}) \ar[r] \ar[l]   & \pi_*(\widehat{S^{0}})
} 
\end{align}
They considered a map $\tau: \widehat{S^{0, -1}} \to \widehat{S^{0, 0}}$ between $p$-completed motivic sphere spectra.  We let $\widehat{S^{0, 0}}/ \tau$ denote the cofiber of the map $\tau$. 
In diagram \eqref{diagram: GWX diagram}, the left column is the mASS for $\widehat{S^{0, 0}}/ \tau$, the middle column is the mASS for $\widehat{S^{0, 0}}$, and the right column is the classical ASS.  
The top horizontal maps are maps of spectral sequences.  Gheorghe-Wang-Xu \cite{Gheorghe_Wang_Xu} proved the left column is isomorphic to the algNSS for any prime $p$.  Hence diagram \eqref{diagram: GWX diagram}
provides a zig-zag diagram to compare higher Adams differentials with their corresponding higher algebraic Novikov differentials.

For $p=2$, Isaksen-Wang-Xu \cite{Isaksen_Wang_Xu_more_stable_stems} used the strategy above to determine $d^{Adams}_r$ based on computer generated $d^{alg}_r$ data.  This enabled them to successfully extend the computation of 2-primary stable homotopy groups from around 60 stems to around 90 stems.

\subsection*{Our main results}

In this paper, we follow the ``cofiber of $\tau$ philosophy'' but focus on the case when $p$ is an odd prime.   
For odd prime $p$, the motivic Steenrod algebra has a simpler form and the diagram 
\eqref{diagram: GWX diagram} presents different features. 
Let us explain our notations and then state our main results.

\begin{notation}
Let $x$ be an element in the $E_2$-page of a certain spectral sequence.  We let $[x]_k$ denote the homology class of $x$ on the $E_k$-page, where $[x]_2$ is understood to be $x$.    We say $x$ can be lifted to the $E_r$-page if $[x]_r$ is well defined, i.e., if $d_i ([x]_i) = 0$ for each $2 \leq i < r$.  
\end{notation}

\begin{thm}
\label{thm: alg eq adams}
 Let $p$ be an odd prime. Let $z \in Ext_{\mathcal{A}_*}^{s+k,t+k} (\mathbb{F}_p , \mathbb{F}_p)$ be an element in the $E_2$-page of the ASS which is detected by $x \in Ext_{P_*}^{s,t} (\mathbb{F}_{p}, I^k / I^{k+1})$ with $s< 2p-2$.  Let $r$ be an integer such that $r + k \leq 2p - 2$ and $z$ can be lifted to the $E_r$-page.  Then we can write $d^{Adams}_r ([z]_r) = [w]_r$, $d^{alg}_r ([x]_r) = [y]_r$, where $w \in Ext_{\mathcal{A}_*}^{s+k+r,t+k+r-1} (\mathbb{F}_p , \mathbb{F}_p)$ can be detected by $y \in Ext^{s+1,t}_{P_*}(\mathbb{F}_{p}, I^{k+r-1} / I^{k+r})$.
\end{thm}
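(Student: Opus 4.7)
The plan is to use diagram \eqref{diagram: GWX diagram} to produce a single motivic class $\tilde{z}$ in the middle column whose realization recovers $z$ in the classical ASS and whose mod-$\tau$ reduction recovers $x$ in the algNSS under the Gheorghe-Wang-Xu identification. Once $\tilde{z}$ is lifted to the $E_r$-page of the mASS for $\widehat{S^{0,0}}$, the motivic differential $d^{mASS}_r(\tilde{z})$ will simultaneously project to $d^{Adams}_r([z]_r)$ under realization and to $d^{alg}_r([x]_r)$ under mod-$\tau$ reduction, and the desired detection of $w$ by $y$ will follow from the compatibility of the motivic $\tau$-adic filtration with the Cartan-Eilenberg filtration on $Ext_{\mathcal{A}_*}$.

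First, I would establish the dictionary between filtrations. At an odd prime the motivic dual Steenrod algebra $\mathcal{A}_{*,*}^{\mathbb{C}}$ is free as a module over $\mathbb{F}_p[\tau]$, so the $\tau$-adic filtration on $Ext^{*,*,*}_{\mathcal{A}_{*,*}^{\mathbb{C}}}(\mathbb{F}_p[\tau],\mathbb{F}_p[\tau])$ is well-defined and its associated graded is identified by Gheorghe-Wang-Xu with the $E_2$-page of the algNSS, the $\tau^k$-piece corresponding to $Ext^{s,t}_{P_*}(\mathbb{F}_p, I^k/I^{k+1})$ and sitting in motivic Adams filtration $s+k$. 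Using this dictionary, the hypothesis that $z$ is detected by $x$ in the CESS lets me choose a motivic lift $\tilde{z}$ whose $\tau$-adic filtration is exactly $k$, whose realization in the classical ASS $E_2$-page is $z$, and whose mod-$\tau$ reduction is $x$.

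Next, I would lift $\tilde{z}$ through the earlier motivic differentials. The assumptions that $[z]_r$ and $[x]_r$ are defined, together with the maps of spectral sequences in diagram \eqref{diagram: GWX diagram}, imply that $d^{mASS}_i(\tilde{z})$ is realization-trivial in the ASS and mod-$\tau$-trivial in the algNSS for $2\leq i<r$, which at odd primes forces $d^{mASS}_i(\tilde{z})$ to vanish after adjusting $\tilde{z}$ by classes of strictly higher $\tau$-adic filtration. The motivic differential $d^{mASS}_r(\tilde{z})$ then lives in a motivic tridegree matching both $d^{Adams}_r(z)$ after realization and $d^{alg}_r(x)$ after mod-$\tau$ reduction; crucially, its $\tau$-adic filtration equals $k+r-1$, so its realization has Adams filtration $s+k+r$ and is detected in the CESS by its mod-$\tau$ image, which is exactly $y$.

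The main obstacle is verifying that the range restrictions $s<2p-2$ and $r+k\leq 2p-2$ are strong enough to guarantee that $d^{mASS}_r(\tilde{z})$ really has $\tau$-adic filtration $k+r-1$ and not strictly higher. At odd primes the motivic $E_2$-page carries $\tau$-power torsion coming from classes in positive Adams filtration, and an unlucky shift into higher $\tau$-filtration would cause the mod-$\tau$ projection of the differential to vanish, severing the parallel with $d^{alg}_r$. The sparseness conditions push the relevant tridegrees into a region of the motivic $E_2$-page, governed by an odd-primary vanishing line analogous to the classical result of Adams, where all $\tau$-torsion is concentrated far enough from the tridegree of $d^{mASS}_r(\tilde{z})$ to preclude such a filtration jump. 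Establishing this sparseness precisely at odd primes, and carefully bookkeeping the tridegree shifts through realization and mod-$\tau$ reduction, is the central technical task of the proof.
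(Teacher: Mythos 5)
Your overall zig-zag strategy is the same as the paper's, but the proposal defers exactly the two steps where the numerical hypotheses $s<2p-2$ and $r+k\leq 2p-2$ must do work, and those steps are the proof. First, to lift $\tilde{z}=1\otimes z$ to the motivic $E_r$-page you invoke that ``$[z]_r$ and $[x]_r$ are defined''; but the theorem only assumes $z$ lifts --- the survival of $x$ to the $E_r$-page is part of what has to be established (in the paper it falls out only after one knows $\tilde{z}$ lifts, via $\psi$ and $\kappa$), so using it here is circular. With only $d_i^{Adams}([z]_i)=0$ in hand, you merely learn that $d_i^{mAdams}([\tilde{z}]_i)$ is killed by inverting $\tau$, i.e.\ is $\tau$-torsion, and ruling out nonzero $\tau$-torsion values is precisely where $s<2p-2$ enters: the paper's Lemma \ref{lem: lifting to Er}(ii) runs an iterated descent (if the torsion class is hit, the source is non-$\tau$-divisible or one divides by $\tau$ and repeats on a strictly shorter differential), ending with a non-$\tau$-divisible class whose motivic weight deficit $a>0$ forces $2a\equiv 0 \bmod 2(p-1)$ and $0<2a\leq s$, contradicting $s<2p-2$. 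Your substitute move --- ``adjusting $\tilde{z}$ by classes of strictly higher $\tau$-adic filtration'' --- is not justified and is also not harmless: a $\tau$-divisible correction has nonzero image after inverting $\tau$, so the adjusted class need no longer realize to $z$; the paper shows no adjustment is needed.

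Second, your key claim that $d_r^{mAdams}([\tilde{z}]_r)$ has $\tau$-adic filtration exactly $k+r-1$ is attributed to ``an odd-primary vanishing line analogous to Adams'' and explicitly left as ``the central technical task.'' The paper does not use a vanishing line at all; it uses the collapse of the CESS at odd primes (Lemma \ref{lem: direct sum decompose Adams E2}) together with the weight formula of Lemma \ref{lem: compute motivic weight}: writing the target as $\sum_i \tau^{n_i}\tilde{u}_i$ over the splitting $Ext_{\mathcal{A}_*}\cong\bigoplus_i Ext_{P_*}(\mathbb{F}_p,I^i/I^{i+1})$ gives $n_i=\tfrac{k+r-1-i}{2}\geq 0$ and $i\equiv k+r-1 \bmod 2(p-1)$, and the hypothesis $0<k+r-1<2p-2$ pins $i=k+r-1$, $n_i=0$, so the target is a single non-$\tau$-divisible term $\tilde{w}$; this is what lets $\phi$, $\psi$, $\kappa$ all transport the same differential. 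It is not clear that a vanishing-line argument would yield this congruence-based rigidity, and since both this step and the lifting step are left unproved, the proposal as written is an outline of the paper's approach rather than a complete argument.
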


\begin{rem}
In the motivic context, we say $z \in Ext_{\mathcal{A}_*}(\mathbb{F}_p, \mathbb{F}_p)$ is detected by $x \in Ext_{P_*}(\mathbb{F}_p, I^k / I^{k+1})$ if and only if $z$ lifts to $\tilde{z} \in Ext_{\mathcal{A}_{*,*}^{\mathbb{C}}}(\mathbb{F}_p[\tau], \mathbb{F}_p[\tau])$ which maps to $x \in Ext_{\mathcal{A}_{*,*}^{\mathbb{C}}}(\mathbb{F}_p[\tau], \mathbb{F}_p) \cong Ext_{P_*}(\mathbb{F}_p, I^k / I^{k+1})$, as shown in diagram \eqref{diagram: GWX diagram}.
\end{rem}

Theorem \ref{thm: alg eq adams} shows, in a certain range, the higher Adams differentials agree with their corresponding higher algebraic Novikov differentials.  It is worth pointing out that the result could fail for $r \geq 2p - 1 - k$.   For example, let $p \geq 5$.  The $k$-value for $b_n \in Ext_{\mathcal{A}_*}^{*,*} (\mathbb{F}_p , \mathbb{F}_p)$ is 0.  It is proved that $b_n$ has nontrivial Adams differential \cite{Ravenel_odd_arf} 
\begin{equation}
    d_{2p-1}^{Adams} (b_n) = h_0 b_{n-1}^p.
\end{equation}
However, the corresponding algebraic Novikov differential is trivial
\begin{equation}
    d_{2p-1}^{alg} (b_n) = 0.
\end{equation}

We also have the following result comparing the length of nontrivial algebraic Novikov differentials and Adams differentials.

\begin{thm}
\label{thm: nontrivial alg d then nontrivial adams d}
Let $p$ be an odd prime. Let $z \in Ext_{\mathcal{A}_*}^{s+k,t+k} (\mathbb{F}_p , \mathbb{F}_p)$ be an element detected  by $x \in Ext_{P_*}^{s,t} (\mathbb{F}_{p}, I^k / I^{k+1})$ with $s < 2p - 2$.  If $x$ is not a permanent cycle in the algNSS, then $z$ is not a permanent cycle in the ASS. 
Moreover, let $r$ (resp. $r'$) be the largest number $n$ such that $x$ (resp. $z$) can be lifted to the $E_{n}$-page, then $r \geq r'$.
\end{thm}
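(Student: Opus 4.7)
The plan is to pass through the motivic zig-zag of diagram \eqref{diagram: GWX diagram}, first lifting $z$ to the mASS for $\widehat{S^{0,0}}$ and then tracking the earliest motivic differential that the lift supports. Assume $x$ is not a permanent cycle in the algNSS and let $r$ be the largest integer such that $x$ lifts to the $E_r$-page, so that $d^{alg}_i([x]_i) = 0$ for $2 \leq i < r$ while $d^{alg}_r([x]_r) \neq 0$. Since $r$ is finite, it suffices to prove $r' \leq r$; this simultaneously rules out $r' = \infty$ and establishes both conclusions of the theorem.

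By the detection hypothesis, $z$ admits a motivic lift $\tilde{z} \in Ext_{\mathcal{A}_{*,*}^{\mathbb{C}}}(\mathbb{F}_p[\tau], \mathbb{F}_p[\tau])$ whose image in $Ext_{\mathcal{A}_{*,*}^{\mathbb{C}}}(\mathbb{F}_p[\tau], \mathbb{F}_p)$ agrees with $x$ under the identification with $Ext_{P_*}(\mathbb{F}_p, I^k / I^{k+1})$. The left-hand map of diagram \eqref{diagram: GWX diagram} is a map of spectral sequences from the $\widehat{S^{0,0}}$-mASS to the $\widehat{S^{0,0}}/\tau$-mASS, and the latter is identified with the algNSS by the results of \cite{Gheorghe_Wang_Xu}. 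Let $r_0$ be the smallest page on which $\tilde{z}$ supports a nontrivial differential in the mASS for $\widehat{S^{0,0}}$. Were $\tilde{z}$ to lift to the motivic $E_{r+1}$-page, its image $x$ would lift to $E_{r+1}$ in the $\widehat{S^{0,0}}/\tau$-mASS, hence in the algNSS, contradicting the definition of $r$. Therefore $r_0 \leq r$.

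It remains to descend from the motivic mASS to the classical ASS via the right-hand map of \eqref{diagram: GWX diagram}, under which $\tilde{z} \mapsto z$ and motivic differentials push forward to classical Adams differentials. Within the range $r_0 + k \leq 2p - 2$, Theorem \ref{thm: alg eq adams} applies at page $r_0$ and guarantees that $d^{Adams}_{r_0}([z]_{r_0})$ is detected by the nonzero motivic value and is therefore itself nonzero, yielding $r' \leq r_0 \leq r$. The main obstacle is the complementary case $r_0 + k > 2p - 2$, where Theorem \ref{thm: alg eq adams} is silent. Here my plan is to work directly with the motivic Adams exact couple, arguing that the nontrivial motivic differential on $\tilde{z}$ cannot be entirely absorbed by hidden $\tau$-multiplications when passing to the classical ASS, and using the hypothesis $s < 2p - 2$ to keep the relevant bidegrees under control. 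This final descent step, which lies outside the scope of Theorem \ref{thm: alg eq adams}, is the delicate part of the argument that I expect to require the most care.
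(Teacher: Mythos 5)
There is a genuine gap, and it sits exactly where you flag it: the descent from the motivic mASS to the classical ASS. Your first step ($r_0 \leq r$, since a lift of $\tilde z$ to the motivic $E_{r+1}$-page would push forward under $\psi$ and $\kappa$ to a lift of $x$ in the algNSS) is the easy half of the argument. The substantive half, that a nontrivial motivic differential on $[\tilde z]_{r_0}$ forces $r' \leq r_0$, is precisely the contrapositive of the paper's Lemma \ref{lem: lifting to Er}(ii): for $s < 2p-2$, if $z$ lifts to the classical $E_n$-page then $\tilde z$ lifts to the motivic $E_n$-page. This is where essentially all the work lies: the paper's proof assumes the motivic differential hits a $\tau$-torsion class, manufactures from it a chain of non-$\tau$-divisible elements $u_1, u_2, \ldots$ supporting differentials of strictly decreasing length, controls their motivic weights via Lemma \ref{lem: compute motivic weight} and the decomposition of Lemma \ref{lem: direct sum decompose Adams E2}, and extracts the contradiction $2(p-1) \leq 2a \leq s < 2p-2$. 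Your proposal replaces this with a stated intention (``work directly with the motivic Adams exact couple \ldots cannot be entirely absorbed by hidden $\tau$-multiplications''), which is not an argument; it is the theorem's actual content.

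Two further points. The case split on $r_0 + k \leq 2p-2$ versus $r_0 + k > 2p-2$ is misdirected: the relevant range hypothesis is $s < 2p-2$, exactly as in the theorem, and the paper needs no such split. Moreover, even in your ``good'' case Theorem \ref{thm: alg eq adams} does not deliver what you claim: it gives the correspondence $d^{Adams}_{r_0}([z]_{r_0}) = [w]_{r_0}$ with $w$ detected by $y$ where $d^{alg}_{r_0}([x]_{r_0}) = [y]_{r_0}$, but when $r_0 < r$ that algebraic differential vanishes, and even when $r_0 = r$ the nonvanishing of $[y]_r$ does not by itself force $[w]_r \neq 0$ on the classical $E_r$-page, since the motivic value could a priori be $\tau$-torsion, which $\phi_r$ kills. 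For comparison, the paper's proof runs the implication in the opposite direction: it shows inductively that if $z$ lifts to the $E_r$-page of the ASS then $x$ lifts to the $E_r$-page of the algNSS, invoking Lemma \ref{lem: lifting to Er}(ii) once to lift $\tilde z$ and then simply pushing forward through $\psi$ and $\kappa$; completing your write-up amounts to proving that lemma.
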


\subsection*{Organization of the paper} 

In Section \ref{sec: motivic}, we review some basic results in motivic homotopy theory as well as the ``cofiber of $\tau$'' method of Gheorghe, Isaksen, Wang, and Xu to compare higher algebraic Novikov differentials with higher Adams differentials. In Section \ref{sec: proof}, we give the proofs for Theorem \ref{thm: alg eq adams} and Theorem \ref{thm: nontrivial alg d then nontrivial adams d}.

\subsection*{Acknowledgments}
We would like to thank the anonymous referee for their constructive suggestions and detailed comments in revising this paper.
The authors are supported by the National Natural Science Foundation of China (No. 12271183).  The second named author is also supported by the National Natural Science Foundation of China (No. 12001474; 12261091).

\section{Comparing $d_r^{Adams}$ with $d_r^{alg}$}
\label{sec: motivic}

Motivic homotopy theory originated out of the work of Voevodsky and Morel \cite{Morel_Voevodsky, Voevodsky_reduced_power, Voevodsky_motivic_EM}. Motivic homotopy theory can be viewed as a successful application of abstract homotopy theory to algebraic geometry and number theory.  From categorical and computational perspectives, the motivic stable homotopy category behaves very similarly to the classical stable homotopy category.  For example, there are motivic spheres, motivic homotopy and homology groups, motivic Eilenberg-MacLane spectra, and motivic Steenrod algebras analogous to the classical ones.
For readers not familiar with motivic homotopy theory, \cite{Duggerz_Isaksen_motivic_Adams, Morel_intro_A1, Voevodsky_lectures} are some helpful references.

In this paper, we choose to work over the field $\mathbb{C}$ of complex numbers and assume $p$ is an odd prime.  Under this setting, we have explicit formulas for the mod $p$ motivic Eilenberg-MacLane spectra $H\mathbb{F}_p^{\text{mot}}$  and 
motivic dual Steenrod algebra $\mathcal{A}^{\mathbb{C}}_{*,*}$.

\begin{prop}[\cite{Voevodsky_reduced_power,Voevodsky_motivic_EM}]
\label{prop: motivic dual steenrod algebra}
Let $p$ be an odd prime.  We have 
$$H\mathbb{F}_{p \quad *,*}^{\text{mot}} = \mathbb{F}_p [\tau],$$
where $\tau$ has bi-degree $(0, -1)$, and that 
$$\mathcal{A}^{\mathbb{C}}_{*,*} = \mathbb{F}_p [\tau] \otimes \mathbb{F}_p [t_1, t_2, \cdots] \otimes E[\tau_0, \tau_1, \cdots],$$
where $\tau$ has bi-degree $(0, -1)$, $t_i$ has bi-degree $(2(p^i-1), p^i-1)$, and $\tau_i$ has bi-degree $(2(p^i-1)+1, p^i-1)$.  
\end{prop}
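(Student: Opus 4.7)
The statement is the combined content of Voevodsky's computation of the motivic cohomology of $\mathrm{Spec}(\mathbb{C})$ and his analysis of the motivic Steenrod algebra, so the plan is to identify the main ingredients rather than to reprove them in full. For the coefficient ring $\mathbb{F}_p[\tau]$, the first step is to invoke the Beilinson--Lichtenbaum theorem, equivalent to the Bloch--Kato norm residue isomorphism of Voevodsky--Rost, which identifies mod $p$ motivic cohomology with mod $p$ etale cohomology in the range $a \leq b$ of the bidegree $(a,b)$. Over the algebraically closed field $\mathbb{C}$, etale cohomology of $\mathrm{Spec}(\mathbb{C})$ with $\mu_p^{\otimes b}$ coefficients is concentrated in cohomological degree zero and equal to $\mathbb{F}_p$ in each weight, since $\mathbb{C}$ contains all $p$-th roots of unity. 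A choice of primitive $p$-th root of unity then picks out a generator $\tau \in H^{0,1}$; compatibility of the isomorphism with cup product forces the polynomial structure, and converting cohomological bidegree into the homotopy bidegree convention used in the proposition gives $|\tau| = (0,-1)$.

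For the algebra structure of $\mathcal{A}^{\mathbb{C}}_{*,*}$, I would follow Milnor's classical strategy using Voevodsky's motivic analogs. First, introduce the motivic Steenrod operations: reduced powers $P^i$ of bidegree $(2i(p-1), i(p-1))$ and a Bockstein $\beta$ of bidegree $(1,0)$. Next, invoke Voevodsky's computation of $H^{*,*}(K(\mathbb{F}_p,n);\mathbb{F}_p)$ to conclude that $\mathcal{A}^{\mathbb{C}}_{*,*}$ is generated as an $\mathbb{F}_p[\tau]$-algebra by the $P^i$ and $\beta$, modulo motivic Adem-type relations. Finally, dualize as in Milnor's classical paper: a polynomial generator $t_i$ appears as the dual of an iterated reduced power on an admissible string, while an exterior generator $\tau_i$ arises by post-composing that iterated operation with $\beta$; the bidegrees stated in the proposition then follow by a direct bidegree count, with an extra $\mathbb{F}_p[\tau]$-factor absorbed into the coefficients throughout.

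The main obstacle, which I would cite rather than reprove, is Voevodsky's determination of $H^{*,*}(K(\mathbb{F}_p,n);\mathbb{F}_p)$. The additional weight grading and the absence of a direct motivic Serre spectral sequence make this substantively harder than Serre's classical computation, and the argument itself depends on both the Beilinson--Lichtenbaum theorem used above and the construction of the motivic cohomology operations. A secondary subtlety, worth flagging, is that the coefficient ring $\mathbb{F}_p[\tau]$ is a proper extension of $\mathbb{F}_p$, so one must verify that the classical generation and dualization arguments go through $\mathbb{F}_p[\tau]$-linearly rather than merely $\mathbb{F}_p$-linearly; this is handled cleanly in Voevodsky's setup but is the place where a careless adaptation of Milnor would fail.
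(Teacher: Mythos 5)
The paper gives no argument for this proposition at all—it is quoted directly from Voevodsky's papers—and your outline correctly identifies exactly the ingredients those references supply: the Beilinson--Lichtenbaum/Bloch--Kato identification computing the mod $p$ motivic cohomology of $\mathrm{Spec}(\mathbb{C})$ as $\mathbb{F}_p[\tau]$, and Voevodsky's motivic Eilenberg--MacLane space computation plus a Milnor-style dualization of the motivic $P^i$ and $\beta$ for the structure of $\mathcal{A}^{\mathbb{C}}_{*,*}$, so your route is the same citation-level one the paper takes and is correct as a sketch. The only quibble is a non-load-bearing detail: classically $\tau_i$ is dual to the admissible monomial $P^{p^{i-1}}\cdots P^p P^1\beta$ with the Bockstein applied first (rightmost), not obtained by post-composing with $\beta$, though this does not affect the bidegree count or the overall structure of the argument.
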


Under this setting, $\mathcal{A}^{\mathbb{C}}_{*,*} \cong \mathbb{F}_p [\tau] \otimes_{\mathbb{F}_p} \mathcal{A}_*$ is just the classical dual Steenrod algebra tensoring the new coefficient.

There is a motivic analog of the classical Adams spectral sequence called the motivic Adams spectral sequence (mASS) (see \cite{Duggerz_Isaksen_motivic_Adams,Hu_Kriz_Ormsby_convergence}).   

\begin{prop}
\label{prop: motivic adams ss}
There is a motivic Adams spectral sequence which converges to the bi-graded homotopy groups of the $H\mathbb{F}_p^{\text{mot}}$-completed motivic sphere $\widehat{S^{0, 0}}$.  The mASS has $E_2$-page
$$E_2^{s,t,u} (S) = Ext^{s,t,u}_{\mathcal{A}_{*,*}^{\mathbb{C}}} (\mathbb{F}_p [\tau], \mathbb{F}_p[\tau]) \Longrightarrow \pi_{t-s,u} (\widehat{S^{0, 0}}),$$
and differentials
$$d_r: E^{s,t,u}_r (S) \rightarrow E^{s+r,t+r-1,u}_r (S).$$
For $x \in E^{s,t,u}_r (S)$, we call $s$ its homological degree, $t$ the inner degree, and $u$ the motivic weight.  
\end{prop}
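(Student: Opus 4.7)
The plan is to imitate the construction of the classical Adams spectral sequence in the motivic stable homotopy category over $\mathbb{C}$. First I would build the canonical $H\mathbb{F}_p^{\text{mot}}$-based Adams resolution of $\widehat{S^{0,0}}$: let $\overline{H\mathbb{F}_p^{\text{mot}}}$ denote the fiber of the unit map $\widehat{S^{0,0}} \to H\mathbb{F}_p^{\text{mot}}$, and iteratively form the tower
\[
\cdots \to X_2 \to X_1 \to X_0 = \widehat{S^{0,0}}, \qquad X_s = \overline{H\mathbb{F}_p^{\text{mot}}}^{\wedge s},
\]
whose cofibers are $K_s = H\mathbb{F}_p^{\text{mot}} \wedge \overline{H\mathbb{F}_p^{\text{mot}}}^{\wedge s}$. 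Applying the bi-graded motivic homotopy functor $\pi_{*,*}$ to this tower yields an exact couple and hence a tri-graded spectral sequence. Tracking how the connecting maps in each cofiber sequence shift the homological degree $s$, the inner degree $t$, and (trivially) the motivic weight $u$ produces the claimed tri-degrees $d_r : E^{s,t,u}_r \to E^{s+r,t+r-1,u}_r$.

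Next, to identify the $E_2$-page I would reproduce Milnor's computation in the motivic setting. By Proposition~\ref{prop: motivic dual steenrod algebra}, the bi-graded homotopy groups of $H\mathbb{F}_p^{\text{mot}} \wedge H\mathbb{F}_p^{\text{mot}}$ realize $\mathcal{A}^{\mathbb{C}}_{*,*}$, which is free as an $\mathbb{F}_p[\tau]$-module. This flatness yields a motivic Künneth formula, so the $E_1$-page of the tower identifies with the cobar complex of $\mathcal{A}^{\mathbb{C}}_{*,*}$ over $\mathbb{F}_p[\tau]$, and its $s$th cohomology is $\mathrm{Ext}^{s,t,u}_{\mathcal{A}_{*,*}^{\mathbb{C}}}(\mathbb{F}_p[\tau], \mathbb{F}_p[\tau])$, as required.

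Finally, for convergence to $\pi_{*,*}(\widehat{S^{0,0}})$ I would appeal to Bousfield's nilpotent-completion philosophy: after $H\mathbb{F}_p^{\text{mot}}$-completion, the homotopy inverse limit of the Adams tower is expected to vanish, so the induced filtration on $\pi_{*,*}(\widehat{S^{0,0}})$ becomes exhaustive and Hausdorff and the spectral sequence converges strongly.

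The hard part, and the reason to cite \cite{Duggerz_Isaksen_motivic_Adams, Hu_Kriz_Ormsby_convergence} for the actual statement, is this convergence step. Because motivic homotopy groups are bi-graded and can carry elements of arbitrarily negative weight, the usual connectivity-based proof of Adams convergence does not transport directly; one must use the cellular structure of the motivic sphere over $\mathbb{C}$ together with carefully controlled connectivity estimates on $H\mathbb{F}_p^{\text{mot}}$ to rule out surviving elements at the inverse limit. Once convergence is secured, the algebraic identification of the $E_2$-page and the bookkeeping of tri-degrees follow formally from the Adams tower and Proposition~\ref{prop: motivic dual steenrod algebra}.
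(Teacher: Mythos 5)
The paper does not prove this proposition at all—it is imported directly from the cited literature (Dugger--Isaksen for the construction and Hu--Kriz--Ormsby for convergence)—and your sketch is exactly the standard Adams-tower construction carried out in those references, with the genuinely delicate step (strong convergence of the $H\mathbb{F}_p^{\text{mot}}$-nilpotent completion to $\pi_{*,*}(\widehat{S^{0,0}})$, using cellularity and finite-type arguments) correctly identified and appropriately deferred to them. So the proposal is sound and takes essentially the same route as the paper's source.
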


There is a topological realization functor $Re: SH^{\text{mot}} \to SH$ (see \cite{Dugger_Isaksen_motivic_realization, Morel_Voevodsky}) from the motivic stable homotopy category $SH^{\text{mot}}$ to the classical stable homotopy category $SH$. This functor maps the motivic sphere $S^{a,b}$ to classical sphere $S^a$ and maps the motivic Eilenberg-MacLane spectrum $H\mathbb{F}_p^{\text{mot}}$ to the classical Eilenberg-MacLane spectrum $H\mathbb{F}_p$.
For the other direction, there is also a constant embedding functor $C: SH \to SH^{\text{mot}}$. We have $Re \circ C = id$.

The functor $Re$ induces a map $\phi$ from the mASS of the motivic sphere to the classical ASS of the classical sphere.
\begin{align}
\label{diagram: mASS and ASS}
\xymatrix{
 E^{s,t,u}_{r} (S)  \ar[d]_{d_r^{mAdams}} \ar[r]^{\phi_r}  & E^{s,t}_{r} (S) \ar[d]^{d_r^{Adams}} \\
 E^{s+r,t+r-1,u}_{r} (S)  \ar[r]^{\phi_r}   & E^{s+r,t+r-1}_{r} (S) \\
} 
\end{align}
The effect of $\phi_r$ is inverting $\tau$ (see \cite{Dugger_Isaksen_motivic_realization, Duggerz_Isaksen_motivic_Adams, Isaksen_stable_stems}).

\begin{prop}[\cite{Duggerz_Isaksen_motivic_Adams,Isaksen_stable_stems}]
\label{prop: motivic e2 page for sphere}
Let $p$ be an odd prime. There is an isomorphism
$$Ext^{s,t,*}_{\mathcal{A}_{*,*}^{\mathbb{C}}} (\mathbb{F}_p [\tau], \mathbb{F}_p[\tau])  \cong \mathbb{F}_p [\tau] \otimes_{\mathbb{F}_p} Ext^{s,t}_{\mathcal{A}_*} (\mathbb{F}_p, \mathbb{F}_p).$$
Moreover, after inverting $\tau$, the mASS of the motivic sphere becomes isomorphic to the classical ASS tensored over $\mathbb{F}_p$ with $\mathbb{F}_p [\tau, \tau^{-1}]$.
\end{prop}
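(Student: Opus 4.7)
The plan is to reduce the proposition to the purely classical calculation of $\operatorname{Ext}_{\mathcal{A}_*}(\mathbb{F}_p, \mathbb{F}_p)$ by exploiting the special structure of $\mathcal{A}^{\mathbb{C}}_{*,*}$ at an odd prime. The crucial structural input sitting behind Proposition \ref{prop: motivic dual steenrod algebra} is that, over $\mathbb{C}$ at an odd prime, the Hopf algebroid coproduct on $\mathcal{A}^{\mathbb{C}}_{*,*}$ introduces no $\tau$-correction terms in the formulas for $\Delta(t_i)$ and $\Delta(\tau_i)$, in contrast to the $p=2$ case. Consequently $\mathcal{A}^{\mathbb{C}}_{*,*} \cong \mathbb{F}_p[\tau] \otimes_{\mathbb{F}_p} \mathcal{A}_*$ as a Hopf algebra over $\mathbb{F}_p[\tau]$, i.e., as the base change of $\mathcal{A}_*$ along the flat inclusion $\mathbb{F}_p \hookrightarrow \mathbb{F}_p[\tau]$.

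With this identification in place, I would take any cofree $\mathcal{A}_*$-comodule resolution $\mathbb{F}_p \to F^{\bullet}$ and apply $\mathbb{F}_p[\tau] \otimes_{\mathbb{F}_p}(-)$. Because $\mathbb{F}_p[\tau]$ is free (in particular flat) over $\mathbb{F}_p$, the result is a cofree $\mathcal{A}^{\mathbb{C}}_{*,*}$-comodule resolution of $\mathbb{F}_p[\tau]$. Computing $\operatorname{Cotor}_{\mathcal{A}^{\mathbb{C}}_{*,*}}(\mathbb{F}_p[\tau], \mathbb{F}_p[\tau])$ from this resolution, a standard base-change argument collapses the cobar complex to $\mathbb{F}_p[\tau] \otimes_{\mathbb{F}_p}$ the classical cobar complex for $\mathcal{A}_*$, yielding the asserted isomorphism
\[
\operatorname{Ext}^{s,t,*}_{\mathcal{A}^{\mathbb{C}}_{*,*}}(\mathbb{F}_p[\tau], \mathbb{F}_p[\tau]) \;\cong\; \mathbb{F}_p[\tau] \otimes_{\mathbb{F}_p} \operatorname{Ext}^{s,t}_{\mathcal{A}_*}(\mathbb{F}_p, \mathbb{F}_p).
\]

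For the ``moreover'' clause, I would use the realization functor $Re$, which classically sends $\tau$ to (a unit representing) the identity on the sphere, so the induced map of spectral sequences $\phi_r$ from diagram \eqref{diagram: mASS and ASS} factors through $\tau$-inversion on the mASS. Under the isomorphism above, $\phi_2$ is exactly the quotient map $\tau \mapsto 1$ tensored with the identity on $\operatorname{Ext}_{\mathcal{A}_*}(\mathbb{F}_p, \mathbb{F}_p)$, so after inverting $\tau$ it becomes an isomorphism on $E_2$. Since $\tau$-inversion is exact and commutes with taking homology, an induction on the page number propagates the isomorphism to every $E_r$, and likewise identifies the differentials, so $\tau^{-1} (\text{mASS}) \cong \mathbb{F}_p[\tau,\tau^{-1}] \otimes_{\mathbb{F}_p} (\text{ASS})$ as spectral sequences.

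The principal obstacle is justifying the first step: namely, that the Hopf algebroid structure of $\mathcal{A}^{\mathbb{C}}_{*,*}$ really is $\mathbb{F}_p[\tau]$-linear at odd primes, with no hidden $\tau$ appearing in coproducts. This is the essential departure from the $2$-primary setting of \cite{Gheorghe_Wang_Xu, Isaksen_Wang_Xu_more_stable_stems} and relies on the explicit formulas of Voevodsky \cite{Voevodsky_reduced_power, Voevodsky_motivic_EM}. Once this structural fact is cleanly in hand, the rest of the argument is routine homological algebra and standard comparison-of-spectral-sequences bookkeeping.
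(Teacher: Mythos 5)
Your argument is correct and is essentially the standard one: the paper itself gives no proof of this proposition (it is quoted from Dugger--Isaksen and Isaksen), and the proofs in those references proceed exactly as you do, via the identification $\mathcal{A}^{\mathbb{C}}_{*,*} \cong \mathbb{F}_p[\tau] \otimes_{\mathbb{F}_p} \mathcal{A}_*$ at odd primes over $\mathbb{C}$ (no $\tau$-corrections in the coproduct, unlike $p=2$), flat base change along $\mathbb{F}_p \hookrightarrow \mathbb{F}_p[\tau]$ on cobar/Cotor, and a page-by-page $\tau$-inversion comparison using that $\tau$ is a permanent cycle so the differentials are $\tau$-linear. No gaps worth flagging.
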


The element $\tau$ can be lifted to a map $\tau: \widehat{S^{0, -1}} \to \widehat{S^{0, 0}}$ between $H\mathbb{F}_{p}^{\text{mot}}$-completed motivic spectra.   We denote the associated cofiber sequence as 
\begin{equation*}
    \widehat{S^{0, -1}} \xrightarrow{\tau} \widehat{S^{0, 0}} \xrightarrow{i} \widehat{S^{0, 0}}/ \tau.
\end{equation*}

The mASS for $\widehat{S^{0, 0}}/ \tau$ has $E_2$-page
$E_2^{s,t,u} (C \tau) = Ext^{s,t,u}_{\mathcal{A}_{*,*}^{\mathbb{C}}} (\mathbb{F}_p [\tau], \mathbb{F}_p)$. 
To avoid potential confusions with the differentials in the mASS for $\widehat{S^{0, 0}}$, we denote the differentials in the mASS for $\widehat{S^{0, 0}}/ \tau$ as $\delta_r^{mAdams}$.
The map $i$ induces a map $\psi$ from the mASS for $\widehat{S^{0, 0}}$ to the mASS for $\widehat{S^{0, 0}}/ \tau$.
\begin{align}
\label{diagram: mASS and mASS}
\xymatrix{
 E^{s,t,u}_{r} (C \tau)  \ar[d]_{\delta_r^{mAdams}}   & E^{s,t,u}_{r} (S) \ar[d]^{d_r^{mAdams}} \ar[l]_{\psi_r} \\
 E^{s+r,t+r-1,u}_{r} (C \tau)   & E^{s+r,t+r-1,u}_{r} (S) \ar[l]_{\psi_r}  \\
} 
\end{align}
The effect of $\psi$ is just sending $\tau$ to $0$ (see \cite{Gheorghe_Wang_Xu, Isaksen_Wang_Xu_more_stable_stems}). 
 
Finally, Gheorghe-Wang-Xu \cite{Gheorghe_Wang_Xu} proved there is an isomorphism $\kappa$ between the mASS for $\widehat{S^{0, 0}}/ \tau$ and the regraded algebraic Novikov spectral sequence.
\begin{align}
\label{diagram: alg and mASS}
\xymatrix{
 \Bar{E}_r^{s+2u-t,2u,t-2u} \ar[d]_{d_r^{alg}} \ar[r]^{\kappa_r}_{\cong}  & E^{s,t,u}_{r} (C \tau) \ar[d]^{\delta_r^{Adams}} \\
 \Bar{E}_r^{s+2u-t+1,2u,t-2u+r-1}  \ar[r]^{\kappa_r}_{\cong}    & E^{s+r,t+r-1,u}_{r} (C \tau) \\
} 
\end{align}

We can summarize these three comparison maps in the following diagram.
\begin{align}
\label{diagram: three comparisons}
\xymatrixcolsep{15pt}
\xymatrix{
 \Bar{E}_r^{s+2u-t,2u,t-2u} \ar[d]_{d_r^{alg}} \ar[r]^{\quad \kappa_r}_{\quad \cong}  & E^{s,t,u}_{r} (C \tau) \ar[d]_{\delta_r^{mAdams}} & E^{s,t,u}_{r} (S) \ar[l]_{\quad \psi_r} \ar[d]_{d_r^{mAdams}} \ar[r]^{\phi_r}  & E^{s,t}_{r} (S) \ar[d]_{d_r^{Adams}} \\
 \Bar{E}_r^{s+2u-t+1,2u,t-2u+r-1}  \ar[r]^{\quad \kappa_r}_{\quad \cong}    & E^{s+r,t+r-1,u}_{r} (C \tau)  & E^{s+r,t+r-1,u}_{r} (S) \ar[l]_{\quad \psi_r} \ar[r]^{\phi_r}   & E^{s+r,t+r-1}_{r} (S) \\
} 
\end{align}

The diagram \eqref{diagram: three comparisons} provides a zig-zag way  to compare higher Adams differentials $d_r^{Adams}$ with their corresponding higher algebraic Novikov differentials $d_r^{alg}$.

\section{Proof of Theorems \ref{thm: alg eq adams}, \ref{thm: nontrivial alg d then nontrivial adams d}}
\label{sec: proof}

We discuss several lemmas before we prove our main results.

\begin{lem}
\label{lem: direct sum decompose Adams E2}
Let $p$ be an odd prime, denote $q = 2(p - 1)$. Given $s, t \geq 0$, we denote $C_{s,t} = \{i \in \mathbb{Z} | i \equiv t ~ ~ \text{mod} ~ q, ~ 0 \leq i \leq s,t \}$. Then we have the following direct sum decomposition of the classical Adams $E_2$-terms.  
$$Ext_{\mathcal{A}_*}^{s,t} (\mathbb{F}_p , \mathbb{F}_p) \cong \bigoplus_{i \in C_{s,t}} Ext_{P_*}^{s-i,t-i} (\mathbb{F}_{p}, I^i / I^{i+1})$$
\end{lem}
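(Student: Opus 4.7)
The plan is to invoke the Cartan--Eilenberg spectral sequence (CESS) associated to the Hopf algebra extension $P_{*} \hookrightarrow \mathcal{A}_{*} \twoheadrightarrow E_{*}$, where $E_{*} = E[\tau_{0}, \tau_{1}, \ldots]$ is the exterior quotient, identify its $E_{2}$-page with the claimed direct sum, and deduce the decomposition from the collapse of this CESS at $E_{2}$ for odd primes.

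The CESS has $E_{2}$-page $Ext^{*}_{P_{*}}(\mathbb{F}_{p}, Ext^{*}_{E_{*}}(\mathbb{F}_{p}, \mathbb{F}_{p}))$; here $Ext^{*}_{E_{*}}(\mathbb{F}_{p}, \mathbb{F}_{p}) \cong \mathbb{F}_{p}[a_{0}, a_{1}, \ldots]$ with $|a_{i}| = (1, 2p^{i} - 1)$, carrying a $P_{*}$-comodule structure inherited from the extension. Miller's identification, already invoked in the introduction to compare the CESS and algNSS $E_{2}$-pages, matches this $P_{*}$-comodule with $E_{0} BP_{*} = \bigoplus_{i} I^{i}/I^{i+1}$ via $a_{i} \leftrightarrow v_{i}$. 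Since $v_{i}$ sits in bidegree $(0, 2p^{i} - 2)$ whereas $a_{i}$ sits in $(1, 2p^{i} - 1)$, the weight-$i$ summand of the $E_{2}$-page contributing to $Ext^{s, t}_{\mathcal{A}_{*}}$ is precisely $Ext^{s - i, t - i}_{P_{*}}(\mathbb{F}_{p}, I^{i}/I^{i+1})$. Moreover, since $|t_{j}| = 2(p^{j} - 1)$ is divisible by $q = 2(p - 1)$, both $P_{*}$ and every $I^{i}/I^{i+1}$ are concentrated in internal degrees divisible by $q$, so this summand vanishes unless $i \equiv t \pmod{q}$, i.e., unless $i \in C_{s, t}$.

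To conclude I would prove the CESS collapses at $E_{2}$ by a direct cochain-level argument. On generators the reduced Steenrod coproducts satisfy $\bar{\Delta}(t_{j}) \in \bar{P}_{*} \otimes \bar{P}_{*}$ and each summand of $\bar{\Delta}(\tau_{i})$ has a $t$-power on the left and a single $\tau$ on the right, so $\bar{\Delta}$ preserves total $\tau$-weight on generators. Since $\Delta$ is multiplicative, this preservation extends to all of $\mathcal{A}_{*}$, and the cobar differential on $C^{\bullet}(\mathcal{A}_{*})$ respects the $\tau$-weight grading on the tensor powers $\bar{\mathcal{A}}_{*}^{\otimes s}$. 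The cobar complex therefore splits as a direct sum of sub-complexes $C^{\bullet}(\mathcal{A}_{*}) = \bigoplus_{i \geq 0} C^{\bullet}_{i}$ indexed by $\tau$-weight, giving $Ext^{*}_{\mathcal{A}_{*}}(\mathbb{F}_{p}, \mathbb{F}_{p}) = \bigoplus_{i} H^{*}(C^{\bullet}_{i})$ and forcing $E_{\infty} = E_{2}$ of the CESS. Combining with the first step yields the stated decomposition.

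The main obstacle is the $\tau$-weight preservation of the cobar differential. While this follows directly from the coproduct formulae of the dual Steenrod algebra at odd primes, it is precisely the feature that distinguishes the odd-primary case from $p = 2$, where the analogous weight is only a filtration because the mod $2$ Steenrod coproducts mix weights.
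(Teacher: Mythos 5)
Your proposal is correct and follows essentially the same route as the paper: the decomposition comes from the collapse, with no nontrivial extensions, of the odd-primary Cartan--Eilenberg spectral sequence for $P_* \to \mathcal{A}_* \to E[\tau_0,\tau_1,\cdots]$, combined with the observation that the internal degrees of $P_*$ and $I^i/I^{i+1}$ are multiples of $q$, which cuts the index set down to $C_{s,t}$. The only difference is that the paper simply cites Ravenel's Theorem 4.4.3 for the collapse and splitting, whereas you re-derive it via the $\tau$-weight splitting of the cobar complex, which is a correct (and standard) argument.
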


\begin{proof}
For odd prime $p$, the Cartan-Eilenberg spectral sequence
collapses from $E_2$-page with no nontrivial extensions \cite[Theorem 4.4.3]{Ravenel_stable_homotopy_groups}.
Hence we have 
$$Ext_{\mathcal{A}_*}^{s,t} (\mathbb{F}_p, \mathbb{F}_p)  \cong \bigoplus_{i \in \mathbb{Z}} Ext_{P_*}^{s-i,t-i} (\mathbb{F}_{p}, I^i / I^{i+1}).$$ 
Note the inner degrees  $|v_n| = |t_n| = 2(p^n - 1)$ are all multiples of $q$.  In order for $Ext^{s-i,t-i}_{P_*}(\mathbb{F}_p, I^i / I^{i+1})$
to be nontrivial, we need $i, s-i, t-i \geq 0$, and that $t-i \equiv 0 ~ ~ \text{mod} ~ q$.  Hence $i$ needs to be in the set $C_{s,t}$.

\end{proof}

\begin{rem}
It is worth pointing out that the Adams differential $d_2^{Adams}$ may not respect this decomposition.
\end{rem}

\begin{notation}
Let $z \in Ext_{\mathcal{A}_*}^{s,t} (\mathbb{F}_p , \mathbb{F}_p)$ be an element in the Adams $E_2$-page. We let $\tilde{z}$ denote the element 
$1 \otimes z \in \mathbb{F}_p [\tau] \otimes_{\mathbb{F}_p} Ext^{s,t}_{\mathcal{A}_*} (\mathbb{F}_p, \mathbb{F}_p) \cong Ext^{s,t,*}_{\mathcal{A}_{*,*}^{\mathbb{C}}} (\mathbb{F}_p [\tau], \mathbb{F}_p[\tau])$ in the $E_2$-page of the mASS of the motivic sphere.
\end{notation}

Note we have $\phi_2 (\tilde{z}) = z$. If $z$ and $\tilde{z}$ can both be lifted to the $E_r$-pages of the respected spectral sequences for some $r$.  Then $\phi_r ([\tilde{z}]_r) = [z]_r$.

\begin{lem}
\label{lem: compute motivic weight}
Let $z$ be an element in $Ext_{\mathcal{A}_*}^{s,t} (\mathbb{F}_p , \mathbb{F}_p)$ which is detected by $x \in Ext^{s-k,t-k}_{P_*}(\mathbb{F}_p, I^k / I^{k+1})$. 
Then $\tilde{z} \in Ext^{s,t,*}_{\mathcal{A}_{*,*}^{\mathbb{C}}} (\mathbb{F}_p [\tau], \mathbb{F}_p[\tau])$ has motivic weight $\frac{t-k}{2}$.
\end{lem}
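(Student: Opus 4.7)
The plan is to trace $\tilde z$ through the comparison maps in diagram \eqref{diagram: three comparisons} and read off its motivic weight by matching indices, so this will be a short bookkeeping argument rather than a genuine computation.

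First, I would invoke the definition of ``detected'' recorded in the remark following Theorem \ref{thm: alg eq adams}: the hypothesis says precisely that $\tilde z$ lifts to an element of $E_2^{s,t,u}(S)$ whose image $\psi_2(\tilde z) \in E_2^{s,t,u}(C\tau)$ coincides, under the isomorphism $\kappa_2$ of diagram \eqref{diagram: alg and mASS}, with the class of $x$. Because $\psi$ is induced by the motivic map $i\colon \widehat{S^{0,0}}\to \widehat{S^{0,0}}/\tau$, it preserves all three degrees of the mASS; in particular, the motivic weight of $\tilde z$ coincides with that of $\kappa_2(x)$.

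Next, I would read off that weight from the re-indexing $\kappa_r \colon \bar E_r^{s+2u-t,\,2u,\,t-2u} \xrightarrow{\cong} E_r^{s,t,u}(C\tau)$ in diagram \eqref{diagram: alg and mASS}. The given $x$ lies in $\bar E_2^{\,s-k,\,t-k,\,k}$. Comparing slot by slot with $(S+2U-T,\,2U,\,T-2U)$ for the unknown tri-degree $(S,T,U)$ of $\kappa_2(x)$, the middle slot gives $2U = t-k$, hence $U = (t-k)/2$, while the remaining slots $T - 2U = k$ and $S + 2U - T = s-k$ are automatically consistent and recover $T = t$, $S = s$. Thus $\kappa_2(x) \in E_2^{s,t,(t-k)/2}(C\tau)$, and the previous paragraph yields the same weight for $\tilde z$.

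I do not foresee a real obstacle: the argument is essentially a translation between the algebraic Novikov indexing conventions recorded in the introduction and the tri-graded motivic indexing. The only sanity check worth making is that $(t-k)/2$ is indeed an integer, which is guaranteed by the sparsity observation underlying Lemma \ref{lem: direct sum decompose Adams E2}, namely that $Ext_{P_*}^{s-k,t-k}(\mathbb{F}_p, I^k/I^{k+1})$ can be nontrivial only when $t-k \equiv 0 \pmod{q}$, so $t-k$ is even.
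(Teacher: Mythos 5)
Your proposal is correct, but it takes a genuinely different route from the paper. The paper proves the lemma directly at the level of the motivic dual Steenrod algebra: by Proposition \ref{prop: motivic dual steenrod algebra} each $t_i$ has motivic weight equal to half its internal degree, while each $\tau_i$ has weight equal to half its internal degree minus $\tfrac12$; since detection in Cartan--Eilenberg filtration $k$ means a (cobar) expression for $z$ involves exactly $k$ factors $\tau_i$, the weight of $\tilde z$ is $\tfrac12 t - \tfrac12 k$. You instead take the motivic definition of ``detected'' from the remark after Theorem \ref{thm: alg eq adams} at face value, use that $\psi$ preserves the trigrading as in diagram \eqref{diagram: mASS and mASS}, and read the weight off the re-indexing of the Gheorghe--Wang--Xu isomorphism in diagram \eqref{diagram: alg and mASS}; your slot-matching $2U=t-k$, $T-2U=k$, $S+2U-T=s-k$ is carried out correctly and recovers $(S,T,U)=(s,t,\tfrac{t-k}{2})$, and your parity check via the sparsity behind Lemma \ref{lem: direct sum decompose Adams E2} is apt. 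What each approach buys: yours is more literally aligned with the paper's stated (motivic) definition of detection and avoids choosing a cobar representative, at the cost of leaning on the precise indexing conventions of $\kappa$; the paper's argument is independent of those conventions and makes the origin of the correction term $-\tfrac{k}{2}$ transparent. One small point you should make explicit: to know that the lift appearing in the detection definition is the specific class $\tilde z = 1\otimes z$ (the one named in the lemma), note that $\psi$ sends $\tau$ to zero, so any $\tau$-divisible lift $\tau^m\otimes z$ with $m>0$ maps to $0$ in $E_2^{*,*,*}(C\tau)$; hence a lift hitting the nonzero class $x$ must be the non-$\tau$-divisible one, and only then does your degree comparison pin down the weight of $\tilde z$ itself.
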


\begin{proof}
For notation simplicity, we let $t(-)$ denote the inner degree and let $u(-)$ denote the motivic weight of an element in $Ext^{*,*,*}_{\mathcal{A}_{*,*}^{\mathbb{C}}} (\mathbb{F}_p [\tau], \mathbb{F}_p[\tau])$. By Proposition \ref{prop: motivic dual steenrod algebra}, we have $u(\tilde{t}_i) = \frac{1}{2} t(\tilde{t}_i)$ for $i \geq 1$, and $u(\tilde{\tau}_i) = \frac{1}{2} t(\tilde{\tau}_i) - \frac{1}{2}$ for $i \geq 0$.

Since $z$ is detected by  $x \in Ext^{s-k,t-k}_{P_*}(\mathbb{F}_p, I^k / I^{k+1})$, the number of $\tau_i$'s in the expression of $z$ is just $k$.  Hence we conclude $u(\tilde{z}) = \frac{1}{2} t(\tilde{z}) - \frac{1}{2} k = \frac{t-k}{2}$.
\end{proof}

\begin{lem}
\label{lem: lifting to Er}
Let $z \in Ext_{\mathcal{A}_*}^{s+k,t+k} (\mathbb{F}_p , \mathbb{F}_p)$ be an element which is detected by $x \in Ext_{P_*}^{s,t} (\mathbb{F}_{p}, I^k / I^{k+1})$.  (i) If $\tilde{z}$ can be lifted to the $E_r$-page for some $r \geq 2$, then $z$ can also be lifted to the $E_r$-page. (ii) Further assume $s < 2p - 2$.  Then $z$ can be lifted to the $E_r$-page implies $\tilde{z}$ can also be lifted to the $E_r$-page.
\end{lem}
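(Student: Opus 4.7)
The plan is to prove both parts by induction on the page index $r$.

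Part (i) should follow formally from the commutativity of diagram \eqref{diagram: mASS and ASS}. Since $\phi_2(\tilde z) = z$ and $\phi$ is a map of spectral sequences, if $d_j^{mAdams}([\tilde z]_j) = 0$ for every $2 \leq j < r$, then $d_j^{Adams}([z]_j) = \phi_j(d_j^{mAdams}([\tilde z]_j)) = 0$ for the same range, so $z$ lifts to the $E_r$-page.

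For part (ii), I would argue by induction on $r$; the base case $r = 2$ is immediate. For the inductive step, assume $\tilde z$ lifts to the $E_i$-page and $z$ lifts to the $E_{i+1}$-page, and set $w := d_i^{mAdams}([\tilde z]_i)$. By Lemma \ref{lem: compute motivic weight}, $\tilde z$ has motivic weight $t/2$, so $w$ lies in $E_i^{s+k+i,\, t+k+i-1,\, t/2}(S)$. Commutativity of diagram \eqref{diagram: mASS and ASS} gives $\phi_i(w) = d_i^{Adams}([z]_i) = 0$; combined with Proposition \ref{prop: motivic e2 page for sphere} (that $\phi$ inverts $\tau$), this forces $\tau^N w = 0$ in $E_i(S)$ for some $N \geq 0$, i.e., $w$ is $\tau$-torsion on $E_i$. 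What remains is to upgrade this to $w = 0$.

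Here I would use that $E_2(S) \cong \mathbb{F}_p[\tau] \otimes_{\mathbb{F}_p} Ext_{\mathcal{A}_*}(\mathbb{F}_p,\mathbb{F}_p)$ is $\mathbb{F}_p[\tau]$-free, so no $\tau$-torsion exists on $E_2$. Consequently, if $w$ were a nonzero $\tau$-torsion class on $E_i$, some motivic Adams differential $d_l^{mAdams}$ with $l < i$ must hit a $\tau^M$-multiple of an $E_2$-representative of $w$. The putative source $v$ would then live in $E_l^{s+k+i-l,\, t+k+i-l,\, t/2-M}(S)$ and must itself admit a nonzero $E_2$-representative. Applying Lemma \ref{lem: direct sum decompose Adams E2} and Lemma \ref{lem: compute motivic weight} to the candidate $E_2$-representatives of both $\tau^M w$ and $v$, the allowed CESS filtration index $j$ must simultaneously satisfy the mod-$q$ congruence in Lemma \ref{lem: direct sum decompose Adams E2} (with $q = 2p-2$) and a parity/range condition coming from matching the motivic weight.

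The hard part will be the combinatorial analysis showing that the hypothesis $s < 2p - 2 = q$ is sharp enough to leave no valid $j$: the mod-$q$ period together with the weight constraint should force the required CESS $Ext$-summand for $v$ out of range or to vanish. This contradicts the existence of a nontrivial source $v$, so $w = 0$, closing the induction.
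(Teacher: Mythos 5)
Part (i) is fine and matches the paper. For part (ii), you have assembled the right ingredients ($\phi$ detects that $w=d_i^{mAdams}([\tilde z]_i)$ is $\tau$-torsion, $\tau$-freeness of the motivic $E_2$-page, the decomposition of Lemma \ref{lem: direct sum decompose Adams E2} plus the weight formula of Lemma \ref{lem: compute motivic weight}, and the bound $s<2p-2$), but the step you defer as ``the hard combinatorial part'' is exactly where the argument breaks as stated. If $\tau^M[w]$ is killed by a differential $d_l^{mAdams}([v]_l)=\tau^M[w]_l$ with $l<i$ and $M>0$, decomposing $v=\sum_j \tau^{n_j}\tilde v_j$ gives $n_j=\tfrac{k+i-l-j+2M}{2}$, and the constraints $0\le j\le s+k+i-l$, $j\equiv k+i-l \bmod q$ are perfectly satisfiable with $n_j>0$ (e.g.\ $j=k+i-l$, $n_j=M$). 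So the congruence/range analysis does \emph{not} rule out the source $v$ in general; it only yields a contradiction when the source is non-$\tau$-divisible, since only then is some $n_j=0$ forced, giving $j=k+i-l+2M$, hence $0<2M\le s$ and $2M\equiv 0 \bmod q$, contradicting $s<q$.

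The missing idea is how to reduce to a non-$\tau$-divisible source, and this is the main content of the paper's proof: one can choose the source $[v]_l$ not divisible by $\tau$ \emph{on the $E_l$-page} (by minimality of $M$), but the underlying element may still be $\tau$-divisible on $E_2$; in that case $v/\tau$ cannot lift to the $E_l$-page, so it supports a nontrivial differential on a strictly earlier page, whose target is again $\tau$-torsion (its $\phi$-image vanishes because $v$ itself lifts to $E_l$), and one repeats the construction. Each iteration strictly decreases the page index and keeps the accumulated weight deficiency $a>0$ (it changes from $a_1$ to $(a_1-1)+a_2$ with $a_2>0$), so the process terminates with a nontrivial differential whose source is genuinely non-$\tau$-divisible and has weight $\tfrac{t}{2}-a$, $a>0$; only at that point does your congruence argument apply and produce the contradiction with $s<2p-2$. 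Without this descent (or some substitute handling a $\tau$-divisible source), your induction does not close, so as written the proposal has a genuine gap rather than a complete proof.
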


\begin{proof}
(i) By assumption, $d_i^{mAdams} ([\tilde{z}]_i) = 0$ for each $2 \leq i < r$. One can inductively show that $d_i^{Adams} ([z]_i) = 0$ (hence $[z]_{i+1}$ is well defined) for each $2 \leq i < r$ by commutativity of diagram \eqref{diagram: mASS and ASS}.   

(ii) Assume, for the sake of contradiction, that there exists $2 \leq r_1 < r$ such that $d_{r_1}^{mAdams} ([\tilde{z}]_{r_1}) \neq 0$.  We claim that there exists a nontrivial differential $d_{j}^{mAdams} ([u]_{j}) \neq 0$, where $u \in Ext^{s+b,t+b,\frac{t}{2}-a}_{\mathcal{A}_{*,*}^{\mathbb{C}}} (\mathbb{F}_p [\tau], \mathbb{F}_p[\tau])$ is non-$\tau$-divisible, $2 \leq j < r_1$, and $a > 0$.

We let $[v_1]_{r_1}$ denote $d_{r_1}^{mAdams} ([\tilde{z}]_{r_1}) \neq 0$.  By the commutativity of diagram \eqref{diagram: mASS and ASS}, we have $\phi_{r_1} ([v_1]_{r_1}) = d_{r_1}^{Adams} ([z]_{r_1}) = 0$.  So $[v_1]_{r_1}$ is $\tau$-torsion.  Let $a_1 > 0$ be the smallest integer such that $\tau^{a_1} [v_1]_{r_1} = 0$.  Then there exists a nontrivial differential $d_{r_2}^{mAdams} ([u_1]_{r_2}) = \tau^{a_1} [v_1]_{r_2} \neq 0$ with $2 \leq r_2 < r_1$.  The element $[u_1]_{r_2}$ is not divisible by $\tau$ on the $E_{r_2}$-page. Otherwise, we have 
$d_{r_2}^{mAdams} ([u_1]_{r_2}/\tau) = \tau^{a_1-1} [v_1]_{r_2}$, which contradicts the definition of $a_1$.

By Lemma \ref{lem: compute motivic weight}, we have $[\tilde{z}]_{r_1} \in E_{r_1}^{s+k,t+k,\frac{t}{2}} (S)$.  This implies 
\begin{equation*}
    [v_1]_{r_1} = d_{r_1}^{mAdams} ([\tilde{z}]_{r_1}) \in E_{r_1}^{s+k+r_1,t+k+r_1-1,\frac{t}{2}} (S).
\end{equation*}
Comparing the degrees, we have 
\begin{equation*}
    \tau^{a_1} [v_1]_{r_2} \in E_{r_2}^{s+k+r_1,t+k+r_1-1,\frac{t}{2}-a_1} (S),~ [u_1]_{r_2} \in E_{r_2}^{s+k+r_1-r_2,t+k+r_1-r_2,\frac{t}{2}-a_1} (S).
\end{equation*}

If $u_1$ is non-$\tau$-divisible, we can take $d_{r_2}^{mAdams} ([u_1]_{r_2}) \neq 0$ as the claimed differential.

Otherwise, since $[u_1]_{r_2}$ is not divisible by $\tau$ on the $E_{r_2}$-page, we conclude $u_1/ \tau$ does not lift to the $E_{r_2}$-page.  Then there exists differential $d_{r_3}^{mAdams} ([u_1/ \tau]_{r_3}) = [v_2]_{r_3} \neq 0$ with $2 \leq r_3 < r_2$.  

By the commutativity of diagram \eqref{diagram: mASS and ASS}, we have 
\begin{equation*}
    \phi_{r_3} ([v_2]_{r_3}) = d_{r_3}^{Adams} (\phi_{r_3} ([u_1/ \tau]_{r_3})) = d_{r_3}^{Adams} (\phi_{r_3} ([u_1]_{r_3})) = \phi_{r_3} (d_{r_3}^{mAdams} ([u_1]_{r_3})),
\end{equation*}
and $d_{r_3}^{mAdams} ([u_1]_{r_3}) = 0$ since $u_1$ can be lifted to the $E_{r_2}$-page. 
So $[v_2]_{r_3}$ is $\tau$-torsion.  
Let $a_2 > 0$ be the smallest integer such that $\tau^{a_2} [v_2]_{r_3} = 0$.  Then there exists a nontrivial differential $d_{r_4}^{mAdams} ([u_2]_{r_4}) = \tau^{a_2} [v_2]_{r_4} \neq 0$ with $2 \leq r_4 < r_3$.  The element $[u_2]_{r_4}$ is not divisible by $\tau$ on the $E_{r_4}$-page. 

For the degrees, we have
\begin{align*}
		[u_1/ \tau]_{r_3} & \in  E_{r_3}^{s+k+r_1-r_2,t+k+r_1-r_2,\frac{t}{2}-(a_1-1)} (S), \\
		[v_2]_{r_3} & \in  E_{r_3}^{s+k+r_1-r_2+r_3,t+k+r_1-r_2+r_3-1,\frac{t}{2}-(a_1-1)} (S), \\
		\tau^{a_2} [v_2]_{r_4} & \in  E_{r_4}^{s+k+r_1-r_2+r_3,t+k+r_1-r_2+r_3-1,\frac{t}{2}-(a_1-1)-a_2} (S), \\
		[u_2]_{r_4} & \in  E_{r_4}^{s+k+r_1-r_2+r_3-r_4,t+k+r_1-r_2+r_3-r_4,\frac{t}{2}-(a_1-1)-a_2} (S). \\
\end{align*}
If $u_2$ is non-$\tau$-divisible, we can take $d_{r_4}^{mAdams} ([u_2]_{r_4}) \neq 0$ as the claimed differential.  Otherwise, we can repeat the process and obtain $u_3, u_4, \cdots$.  Note there are only finitely many integers between $2$ and $r_1$.  After repeating this process several times, we will eventually obtain a desired nontrivial differential $d_{j}^{mAdams} ([u]_{j}) \neq 0$, where $u \in Ext^{s+b,t+b,\frac{t}{2}-a}_{\mathcal{A}_{*,*}^{\mathbb{C}}} (\mathbb{F}_p [\tau], \mathbb{F}_p[\tau])$ is non-$\tau$-divisible, $2 \leq j < r_1$, and $a > 0$.

By Proposition \ref{prop: motivic e2 page for sphere} and Lemma \ref{lem: direct sum decompose Adams E2}, we can write 
\begin{equation*}
    u = \sum_{i \in C_{s+b,t+b}} \tau^{n_i} \tilde{u}_i,
\end{equation*}
where $u_i$ can be detected by some 
$m_i \in Ext^{s+b-i,t+b-i}_{P_*}(\mathbb{F}_{p}, I^i / I^{i+1})$. Comparing the motivic weights using Lemma \ref{lem: compute motivic weight}, we get $n_i = \frac{b-i+2a}{2} \geq 0$ for $\tilde{u}_i \neq 0$.

Note $u$ is non-$\tau$-divisible. This implies the $n_i = 0$ term is nontrivial.
In particular, we have $b+2a \in C_{s+b,t+b}$.  This implies
\begin{equation}
   b+2a \leq s+b, \quad b+2a \equiv t+b ~ ~ \text{mod} ~ q.
\end{equation}
Note $x \in Ext^{s,t}_{P_*}(\mathbb{F}_p, I^k / I^{k+1})$ implies $t \equiv 0$ mod $q$.  In summary, we have 
\begin{equation}
    0 < 2a \leq s, \quad  2a \equiv 0  ~ ~ \text{mod} ~ q.
\end{equation}
Then $s \geq 2a \geq 2(p-1)$, this contradicts $s < 2p-2$.  Thus we have proved (ii).
\end{proof}

Now we proceed to prove Theorem \ref{thm: alg eq adams} and Theorem \ref{thm: nontrivial alg d then nontrivial adams d}.

\begin{proof}[Proof of Theorem \ref{thm: alg eq adams}]

Our strategy is to compare the differentials via diagram \eqref{diagram: three comparisons}.  As we will see, with the given assumptions, diagram \eqref{diagram: three comparisons} can be specialized to the following diagram.
\begin{align}
\label{diagram: summary diagram one}
\xymatrix{
 [x]_r  \ar[r]^{\kappa_r}  \ar[d]_{d_r^{alg}}  & [\tilde{z}]_r \ar[d]^{\delta_r^{mAdams}} & [\tilde{z}]_r \ar[l]_{\psi_r} \ar[d]^{d_r^{mAdams}} \ar[r]^{\phi_r}  & [z]_r \ar[d]^{d_r^{Adams}}  \\
 [y]_r  \ar[r]^{\kappa_r}   & [\tilde{w}]_r & [\tilde{w}]_r \ar[l]_{\psi_r} \ar[r]^{\phi_r} & [w]_r \\
} 
\end{align} 

By Lemma \ref{lem: lifting to Er}, $[\tilde{z}]_r$ is well defined and we have $\phi_r ([\tilde{z}]_r) = [z]_r$.  By Lemma \ref{lem: compute motivic weight}, we have $[\tilde{z}]_r \in E_r^{s+k,t+k,\frac{t}{2}} (S)$.  Then we can write $d_r^{mAdams} ([\tilde{z}]_r) = [u]_r$, where $u \in Ext^{s+k+r,t+k+r-1,\frac{t}{2}}_{\mathcal{A}_{*,*}^{\mathbb{C}}} (\mathbb{F}_p [\tau], \mathbb{F}_p[\tau])$. 
By Proposition \ref{prop: motivic e2 page for sphere} and Lemma \ref{lem: direct sum decompose Adams E2}, we can write 
\begin{equation}
\label{eq: decompose u}
    u = \sum_{i \in C_{s+k+r,t+k+r-1}} \tau^{n_i} \tilde{u}_i,
\end{equation}
where $u_i$ can be detected by some 
$m_i \in Ext^{s+k+r-i,t+k+r-1-i}_{P_*}(\mathbb{F}_{p}, I^i / I^{i+1})$. Comparing the motivic weights using Lemma \ref{lem: compute motivic weight}, we get $n_i = \frac{k+r-1-i}{2}$ for $\tilde{u}_i \neq 0$.

Since $n_i \geq 0$, this forces 
$i \leq k+r-1$.  By definition, $i \in C_{s+k+r,t+k+r-1}$ implies
\begin{equation}
    i \geq 0, \quad i \equiv t+k+r-1 \quad \text{mod} ~  q,
\end{equation}
where we denote $q = 2(p-1)$.  Moreover, since $x \in Ext^{s,t}_{P_*}(\mathbb{F}_p, I^k / I^{k+1})$, we have $t \equiv 0$ mod $q$.
In summary, we have
\begin{equation}
\label{eq: summary of i}
   0 \leq i \leq k+r-1 , \quad    i \equiv k+r-1 \quad \text{mod} ~ q
\end{equation}
for nontrivial $\tilde{u}_i$.

By assumption, we have $0< k+r-1 < q$.  Then \eqref{eq: summary of i} forces $i = k+r-1$, the corresponding $n_i = 0$.
So we can rewrite \eqref{eq: decompose u} as $u = \tilde{w}$, where we let $w$ denote $u_{k+r-1}$.  We also let $y$ denote $m_{k+r-1}$ detecting $w$.

We have $d_r^{mAdams} ([\tilde{z}]_r) = [\tilde{w}]_r$.  Note $\phi_r ([\tilde{w}]_r) = [w]_r$.  The commutativity of diagram \eqref{diagram: mASS and ASS} implies $d_r^{Adams} ([z]_r) = [w]_r$.

Note $[\tilde{z}]_r$ and $[\tilde{w}]_r$ are non-$\tau$-divisible, $\psi$ sends $[\tilde{z}]_r$ and $[\tilde{w}]_r$ to the corresponding elements in $E_r^{*,*,*} (C \tau)$ of the same form, which we abuse the notation and still denote by $[\tilde{z}]_r$ and $[\tilde{w}]_r$ respectively.  The commutativity of diagram \eqref{diagram: mASS and mASS} implies $\delta_r^{mAdams} ([\tilde{z}]_r) = [\tilde{w}]_r$. 

Finally, the the isomorphism $\kappa$ associates $\tilde{z}$ with $x$ and  $\tilde{w}$ with $y$.  Hence $\kappa_r ([x]_r) = [\tilde{z}]_r$,  $\kappa_r ([y]_r) = [\tilde{w}]_r$, and $d_r^{alg} ([x]_r) = [y]_r$. 

Now we have completed diagram \eqref{diagram: summary diagram one}.  The results of the theorem follow directly.

\end{proof}

\begin{proof}[Proof of Theorem \ref{thm: nontrivial alg d then nontrivial adams d}]

We can prove the following equivalent statement: suppose $r \geq 2$ is an integer such that $z$ can be lifted to the $E_r$-page (of the ASS). Then $x$ can also be lifted to the $E_r$-page (of the algNSS).

We will establish the statement using an inductive approach. By definition, $x$ can be lifted to the $E_2$-page. Next, assuming that $x$ can be lifted to the $E_i$-page with $2 \leq i < r$, we need to show that $d_i^{alg} ([x]_i) = 0$. This will imply that $x$ can be lifted to the $E_{i+1}$-page.

We study the differential $d_i^{alg} ([x]_i)$ via diagram \eqref{diagram: three comparisons}.  As we will see, with the given assumptions, diagram \eqref{diagram: three comparisons} can be specialized to the following diagram.
\begin{align}
\label{diagram: summary diagram two}
\xymatrix{
 [x]_i  \ar[r]^{\kappa_i}  \ar[d]_{d_i^{alg}}  & [\tilde{z}]_i \ar[d]^{\delta_i^{mAdams}} & [\tilde{z}]_i \ar[l]_{\psi_i} \ar[d]^{d_i^{mAdams}} \ar[r]^{\phi_i}  & [z]_i \ar[d]^{d_i^{Adams}}  \\
 0  \ar[r]^{\kappa_i}   & 0 & 0 \ar[l]_{\psi_i} \ar[r]^{\phi_i} & 0\\
} 
\end{align}

By Lemma \ref{lem: lifting to Er},  $\tilde{z}$ can be lifted to the $E_r$-page of the mASS of the sphere.    Hence $d_i^{mAdams} ([\tilde{z}]_i) = 0$. Note $[\tilde{z}]_i$ is non-$\tau$-divisible, so $\psi$ sends $[\tilde{z}]_i$ to the corresponding element in $E_r^{*,*,*} (C \tau)$ of the same form, which we abuse the notation and still denote by $[\tilde{z}]_i$.  The commutativity of diagram \eqref{diagram: mASS and mASS} implies $\delta_i^{mAdams} ([\tilde{z}]_i) = 0$. Finally, using the isomorphism $\kappa$, we deduce $d_i^{alg} ([x]_i) = 0$. 

\end{proof}

\bibliographystyle{plain}
\bibliography{adams}

\begin{thebibliography}{10}

\bibitem{Andrews_Miller}
Michael Andrews and Haynes Miller.
\newblock Inverting the {H}opf map.
\newblock {\em J. Topol.}, 10(4):1145--1168, 2017.

\bibitem{Cartan_Eilenberg_homological_algebra}
Henri Cartan and Samuel Eilenberg.
\newblock {\em Homological algebra}.
\newblock Princeton University Press, Princeton, N. J., 1956.

\bibitem{Dugger_Isaksen_motivic_realization}
Daniel Dugger and Daniel~C. Isaksen.
\newblock Topological hypercovers and {$\mathbb{A}^1$}-realizations.
\newblock {\em Math. Z.}, 246(4):667--689, 2004.

\bibitem{Duggerz_Isaksen_motivic_Adams}
Daniel Dugger and Daniel~C. Isaksen.
\newblock The motivic {A}dams spectral sequence.
\newblock {\em Geom. Topol.}, 14(2):967--1014, 2010.

\bibitem{Gheorghe_Wang_Xu}
Bogdan Gheorghe, Guozhen Wang, and Zhouli Xu.
\newblock The special fiber of the motivic deformation of the stable homotopy
  category is algebraic.
\newblock {\em Acta Math.}, 226(2):319--407, 2021.

\bibitem{Hu_Kriz_Ormsby_convergence}
P.~Hu, I.~Kriz, and K.~Ormsby.
\newblock Convergence of the motivic {A}dams spectral sequence.
\newblock {\em J. K-Theory}, 7(3):573--596, 2011.

\bibitem{Isaksen_stable_stems}
Daniel~C. Isaksen.
\newblock Stable stems.
\newblock {\em Mem. Amer. Math. Soc.}, 262(1269):viii+159, 2019.

\bibitem{Isaksen_Wang_Xu_more_stable_stems}
Daniel~C Isaksen, Guozhen Wang, and Zhouli Xu.
\newblock More stable stems.
\newblock {\em arXiv preprint arXiv:2001.04511}, 2020.

\bibitem{Miller_algANSS}
Haynes~R. Miller.
\newblock {\em Some algebraic aspects of the {A}dams–{N}ovikov spectral
  sequence}.
\newblock PhD thesis, Princeton University, 1974.

\bibitem{Miller_relations}
Haynes~R. Miller.
\newblock On relations between {A}dams spectral sequences, with an application
  to the stable homotopy of a {M}oore space.
\newblock {\em J. Pure Appl. Algebra}, 20(3):287--312, 1981.

\bibitem{Morel_intro_A1}
Fabien Morel.
\newblock An introduction to {$\mathbb{A}^1$}-homotopy theory.
\newblock In {\em Contemporary developments in algebraic {$K$}-theory}, ICTP
  Lect. Notes, XV, pages 357--441. Abdus Salam Int. Cent. Theoret. Phys.,
  Trieste, 2004.

\bibitem{Morel_Voevodsky}
Fabien Morel and Vladimir Voevodsky.
\newblock {$\mathbb{A}^1$}-homotopy theory of schemes.
\newblock {\em Inst. Hautes \'{E}tudes Sci. Publ. Math.}, ~(90):45--143 (2001),
  1999.

\bibitem{Novikov_methods}
S.~P. Novikov.
\newblock Methods of algebraic topology from the point of view of cobordism
  theory.
\newblock {\em Izv. Akad. Nauk SSSR Ser. Mat.}, 31:855--951, 1967.

\bibitem{Ravenel_odd_arf}
Douglas~C. Ravenel.
\newblock The non-existence of odd primary {A}rf invariant elements in stable
  homotopy.
\newblock {\em Math. Proc. Cambridge Philos. Soc.}, 83(3):429--443, 1978.

\bibitem{Ravenel_stable_homotopy_groups}
Douglas~C. Ravenel.
\newblock {\em Complex cobordism and stable homotopy groups of spheres}, volume
  121 of {\em Pure and Applied Mathematics}.
\newblock Academic Press, Inc., Orlando, FL, 1986.

\bibitem{Voevodsky_reduced_power}
Vladimir Voevodsky.
\newblock Reduced power operations in motivic cohomology.
\newblock {\em Publ. Math. Inst. Hautes \'{E}tudes Sci.}, ~(98):1--57, 2003.

\bibitem{Voevodsky_motivic_EM}
Vladimir Voevodsky.
\newblock Motivic {E}ilenberg-{M}aclane spaces.
\newblock {\em Publ. Math. Inst. Hautes \'{E}tudes Sci.}, ~(112):1--99, 2010.

\bibitem{Voevodsky_lectures}
Vladimir Voevodsky, Oliver R\"{o}ndigs, and Paul~Arne {\O}stv{\ae}r.
\newblock Voevodsky's {N}ordfjordeid lectures: motivic homotopy theory.
\newblock In {\em Motivic homotopy theory}, Universitext, pages 147--221.
  Springer, Berlin, 2007.

\bibitem{Wang_Wang_Zhang}
Xiangjun Wang, Yaxing Wang, and Yu~Zhang.
\newblock Some nontrivial secondary {A}dams differentials on the fourth line.
\newblock {\em arXiv preprint arXiv:2209.06586}, 2022.

\end{thebibliography}

\end{document}